\documentclass[11pt, twoside]{article}
\usepackage{amssymb, amsmath, amsthm}
\usepackage[left=25mm, right=25mm, bottom=30mm]{geometry}
\usepackage{fancyhdr}
\usepackage{tikz}
\usetikzlibrary{positioning}
\usepackage{titling}
\predate{}
\postdate{}
\usepackage{lipsum}
\newtheorem{theorem}{Theorem}
\newtheorem{lemma}[theorem]{Lemma}
\newtheorem{proposition}[theorem]{Proposition}
\newtheorem {conjecture}[theorem]{Conjecture}
\begin{document}
\bibliographystyle{amsplain}
\pagestyle{fancy}
\fancyhf{}
\fancyhead [LE, RO] {\thepage}
\fancyhead [CE] {MARIA-ROMINA IVAN}
\fancyhead [CO] {SATURATION FOR THE BUTTERFLY POSET}
\renewcommand{\headrulewidth}{0pt}
\title{\Large\textbf{SATURATION FOR THE BUTTERFLY POSET}}
\author{MARIA-ROMINA IVAN}
\date{}
\maketitle
\begin{abstract}Given a finite poset $\mathcal P$, we call a family $\mathcal F$ of subsets of $[n]$ $\mathcal P$-saturated if $\mathcal F$ does not contain an induced copy of $\mathcal P$, but adding any other set to $\mathcal F$ creates an induced copy of $\mathcal P$. The induced saturated number of $\mathcal P$, denoted by $\text{sat}^*(n,\mathcal P)$, is the size of the smallest $\mathcal P$-saturated family with ground set $[n]$.\\In this paper we are mainly interested in the four-point poset called the butterfly. Ferrara, Kay, Kramer, Martin, Reiniger, Smith and Sullivan showed that the saturation number for the butterfly lies between $\log_2{n}$ and $n^2$. We give a linear lower bound of $n+1$. We also prove some other results about the butterfly and the poset $\mathcal N$.
\end{abstract}
\section{Introduction}

\par We say that a poset $(\mathcal P,\preceq)$ contains an \textit{induced copy} of a poset $(\mathcal Q,\preceq')$ if there exists an injective order-preserving function $f:\mathcal Q\rightarrow\mathcal P$ such that $(f(\mathcal Q),\preceq)$ is isomorphic to $(\mathcal Q,\preceq')$. If elements $a$ and $b$ of a poset are not related, or incomparable, we write $a\parallel b$. \par In this paper we consider the power-set of $[n]=\lbrace 1,2,\cdots,n\rbrace$ with the partial order induced by inclusion. 
If $\mathcal Q$ is a finite poset and $\mathcal F$ is a family of subsets of $[n]$, we say that $\mathcal F$ is $\mathcal Q$ \textit{saturated} if $\mathcal F$ does not contain an induced copy of $\mathcal Q$, and for any $S\notin\mathcal F$, $\mathcal F\cup S$ contains an induced copy of $\mathcal Q$. The smallest size of a $\mathcal Q$-saturated family of subsets of $[n]$ is called the \textit{induced saturated number}, denoted by $\text{sat}^*(n,\mathcal Q)$.
\par The posets for which we will analyse the induced saturated number are the butterfly (Figure 1) which we denote by $\mathcal B$, and $\mathcal N$ (Figure 2).
\begin{center}
\begin{tikzpicture}
\node (A) at (-4,0) {\textbullet};
\node (B) at (-4,2) {\textbullet};
\node (C) at (-2,0) {\textbullet};
\node (D) at (-2,2) {\textbullet};
\draw (A)--(B)--(C)--(D)--(A); 
\node (A_1) at (0,0) {\textbullet};
\node (B_1) at (0,2) {\textbullet};
\node (C_1) at (2,0) {\textbullet};
\node (D_1) at (2,2) {\textbullet};
\draw (A_1)--(B_1)--(C_1)--(D_1);
\node (f1) at (-3,-1) {$\text{Figure 1: }\mathcal B$};
\node (f2) at (1,-1) {$\text{Figure 2: }\mathcal N$};
\end{tikzpicture}
\end{center}
Ferarra, Kay, Krammer, Martin, Reiniger, Smith and Sullivan \cite{Ferrara2017TheSN} proved that for a large class of posets, including the butterfly and $\mathcal N$, the induced saturated number is at least the biclique  cover number of the complete graph on $n$ vertices, namely $\log_2 n$. Based on the butterfly-saturated family $\mathcal F=\lbrace \emptyset, \lbrace i\rbrace, \lbrace i,j\rbrace, \lbrace 1,2,\cdots,i\rbrace: 1\leq i\leq n, 1\leq j\leq n\rbrace$, they also conjectured that the induced saturated number for the butterfly is of order $n^2$. They also provided an upper bound of $2n$ for $\text{sat}^*(n,\mathcal N)$ by constructing the $\mathcal N$-saturated family $\lbrace \emptyset, \lbrace i\rbrace, \lbrace 1,2,\cdots,i\rbrace:1\leq i\leq n\rbrace$. In this paper we improve the lower bounds to $n+1$ for the butterfly and to $\sqrt n$ for $\mathcal N$.\\
We mention that there has been a considerable amount of work on extremal problems in poset theory--see Griggs and Li \cite{inbook} for a survey. Saturation for posets was introduced by Gerbner and Patk\'os \cite{gerbner2018extremal}, although this was not for $\textit{induced}$ saturation. See Morrison, Noel and Scott \cite{Morrison2014OnSK} for some improvements and related results.
\section{A linear lower bound on $\text{sat}^*(n, \mathcal B)$}
In this section we prove that any butterfly-saturated family has size at least $n+1$. The strategy is to look at singletons that are not in the family and associate in an injective manner to each one of them a butterfly, satisfying some maximality conditions, that is formed when the singleton is added to the family. This injective association will allow us to construct an explicit injection from the ground set $[n]$ to $\mathcal F$. Together with the observation that the empty set has to be in $\mathcal F$, we will obtain the claimed lower bound.
\begin{lemma}Let $\mathcal{F}$ be a $\mathcal B$-saturated family. If $\lbrace i\rbrace$ and $\lbrace j\rbrace \in \mathcal{F}$, then the pair $\lbrace i,j\rbrace$ is an element of $\mathcal{F}$.
\begin{proof} Assume $\lbrace i,j\rbrace$ is not an element of $\mathcal F$. Since $\mathcal F$ is $\mathcal B$-saturated, this implies that $\mathcal{F}\cup \lbrace i,j\rbrace$ contains a butterfly. That butterfly must involve the pair $\lbrace i,j\rbrace$, or else the initial family will contain a butterfly.\\If $\lbrace i, j\rbrace$ is one of the maximal elements of the butterfly, then the two incomparable elements below it must be the singletons $\lbrace i\rbrace$ and $\lbrace j\rbrace$. But then they will be included in the other maximal element of the butterfly, call it $M$, and thus $\lbrace i,j\rbrace\subset M$, contradicting the incomparability of the maximal elements.\\If $\lbrace i, j\rbrace$ is one of the minimal elements, then, by replacing $\lbrace i,j\rbrace$ in the newly formed butterfly with $\lbrace i\rbrace$ or $\lbrace j\rbrace$, we form a butterfly in $\mathcal{F}$, unless $\lbrace i\rbrace$ and $\lbrace j\rbrace$ are comparable to the other minimal element, call it $N$. Thus $\lbrace i,j\rbrace\subset N$, contradicting $N\parallel \lbrace i,j\rbrace$.
\end{proof}
\end{lemma}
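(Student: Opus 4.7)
The plan is a proof by contradiction: suppose $\{i,j\}\notin\mathcal{F}$. Saturation then forces $\mathcal{F}\cup\{\{i,j\}\}$ to contain an induced copy of $\mathcal{B}$, and that copy must involve $\{i,j\}$, since otherwise $\mathcal{F}$ itself would already contain a butterfly. Because the butterfly only has two levels, the argument splits into two cases, according to whether $\{i,j\}$ plays the role of a maximal or minimal vertex of the copy.

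In the first case, the two minima of the copy are distinct incomparable proper subsets of $\{i,j\}$, which forces them to be exactly the singletons $\{i\}$ and $\{j\}$. The other maximum $M$ must then contain both of these singletons, so $\{i,j\}\subseteq M$, contradicting the incomparability of the two maxima. In the second case, with companion minimum $N$ and maxima $X,Y$, I would try replacing $\{i,j\}$ by $\{i\}$ inside the copy: the containments $\{i\}\subseteq X,Y$ and the incomparability $X\parallel Y$ persist, so unless $\{i\}$ turns out to be comparable to $N$ we obtain an induced butterfly living entirely in $\mathcal{F}$, contradicting $\mathcal{B}$-freeness of $\mathcal{F}$. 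The analogous swap with $\{j\}$ forces $\{j\}$ to be comparable to $N$ as well, and then the only way for both singletons to be comparable to $N$ without $N$ itself being comparable to $\{i,j\}$ is $N\supseteq\{i,j\}$, which is the same contradiction $N\not\parallel\{i,j\}$.

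I do not anticipate a serious obstacle: the entire argument is driven by the fact that the two-element set $\{i,j\}$ has only trivial proper subsets, so its possible positions inside a butterfly are tightly constrained. The only step worth a careful line is the minima case, where one must verify that the singleton swap preserves every butterfly relation; this check reduces precisely to the $N$-comparability test that then produces the contradiction.
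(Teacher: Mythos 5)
Your proposal is correct and follows essentially the same argument as the paper: the same case split on whether $\{i,j\}$ is a maximal or minimal element of the new butterfly, with the maxima case forced to have $\{i\},\{j\}$ as minima (contradicting incomparability of the maxima) and the minima case handled by the same singleton-replacement trick forcing $\{i,j\}\subseteq N$. The only difference is that you spell out slightly more carefully why comparability of both singletons with $N$ forces $N\supseteq\{i,j\}$, which the paper leaves implicit.
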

\par Note that if a butterfly-saturated family $\mathcal F$ contains $\Theta(n)$ singletons, then $\mathcal F$ contains $\Theta(n^2)$ pairs, so that $|\mathcal F|\geq\Theta(n^2)$.\\\par
We define a \textit{chevron} to be a triplet $(A,B,C)$ of subsets of $\lbrack n\rbrack$ with the property that $C\subset A$, $C\subset B$ and $A\parallel B$.
\begin{theorem}
$\text{sat}^*(n,\mathcal B)\geq n+1$.
\begin{proof}
We will first assign to every $\lbrace i\rbrace\notin F$ a chevron with elements from $\mathcal F$ in such a way that no two singletons are assigned the same chevron.\\
If $\lbrace i\rbrace\notin F$, then $\mathcal F\cup\lbrace i\rbrace$ contains a butterfly and that butterfly has to involve the singleton, otherwise $\mathcal F$ would not be butterfly-free. Moreover, that singleton has to be one of the minimal elements of the butterfly since it does not have two incomparable elements below it. Therefore we have the structure shown below.

\begin{center}
\begin{tikzpicture}
\node (A) at (-2,0) {\textbullet};
\node [above=0cm of A] {$A$};
\node (B) at (2,0) {\textbullet};
\node [above=0cm of B] {$B$};
\node (extra) at (-2, -2) {\textbullet};
\node [below=0cm of extra] {$\lbrace i\rbrace$};
\node (C) at (2, -2) {\textbullet};
\node [below=0cm of C] {$C$};
\node (extrab) at (5, -2) {\textbullet};
\node [below=0cm of extrab] {$\lbrace i_1\rbrace$};
\draw (A)--(C)--(B)--(extra)--(A)--(extrab)--(B);
\end{tikzpicture}
\end{center}
It is obvious that $i\notin C$. Among all these constructions, we pick the one having $|C|$ maximal and assign the chevron $(A,B,C)$ to $\lbrace i\rbrace$. We now need to show that under this construction, a chevron is not assigned to two different singletons. Assume that $\lbrace i_1\rbrace$ has also been assigned the same $(A,B,C)$ chevron, as shown above. Consider the set $C\cup\lbrace i_1\rbrace$. It is clearly incomparable to $\lbrace i\rbrace$ since $i\neq i_1$, it is contained in both $A$ and $B$, but not equal to either of them since they contain $i$, thus $C\cup\lbrace i_1\rbrace\notin F$ by maximality of $|C|$. Therefore $C\cup\lbrace i_1\rbrace$ has to form a butterfly with three elements of $\mathcal F$.
\begin{enumerate}
\item Case 1: $C\cup\lbrace i_1\rbrace$ is one of the minimal elements of the butterfly as shown below	, where $A^*, B^*, C^*\in\mathcal F$.
\begin{center}
\begin{tikzpicture}
\node (A) at (-2,0) {\textbullet};
\node [above=0cm of A] {$A^*$};
\node (B) at (2,0) {\textbullet};
\node [above=0cm of B] {$B^*$};
\node (extra) at (-2, -2) {\textbullet};
\node [left=0cm of extra] {$C\cup\lbrace i_1\rbrace$};
\node (C) at (2, -2) {\textbullet};
\node [below=0cm of C] {$C^*$};
\node (extra2) at (-2, -4) {\textbullet};
\node [below=0cm of extra2] {$C$};
\draw (A)--(C)--(B)--(extra)--(A);
\draw (extra)--(extra2);
\end{tikzpicture}
\end{center}
To stop $A^*, B^*, C^*, C$ from forming a butterfly in $\mathcal F$, we need $C$ and $C^*$ to be comparable, and since $C\cup\lbrace i_1\rbrace\parallel C^*$, the only option is $C\subset C^*$ and $i_1\notin C^*$. Now the chevron $(A^*, B^*, C^*)$ has the property that $\lbrace i_1\rbrace\parallel C^*$, $i_1\in A^*, B^*$ and the size of $C^*$ is strictly greater than the size of $C$. By construction, this contradicts that $(A,B,C)$ was assigned $\lbrace i_1\rbrace$.
\item Case 2: $C\cup\lbrace i_1\rbrace$ is one of the maximal elements of the butterfly as shown in the diagram below. \begin{center}
\begin{tikzpicture}
\node (A) at (2,-2) {\textbullet};
\node [below=0cm of A] {$A^*$};
\node (B) at (2,0) {\textbullet};
\node [above=0cm of B] {$B^*$};
\node (extra) at (-2, 0) {\textbullet};
\node [left=0cm of extra] {$C\cup\lbrace i_1\rbrace$};
\node (C) at (-2, -2) {\textbullet};
\node [below=0cm of C] {$C^*$};
\node (extra2) at (-4, 2) {\textbullet};
\node [above=0cm of extra2] {$A$};
\node (extra3) at (0, 2) {\textbullet};
\node [above=0cm of extra3] {$B$};
\draw (B)--(extra2)--(extra)--(extra3)--(B);
\draw (extra)--(A)--(B)--(C)--(extra);
\end{tikzpicture}
\end{center}
We obviously have $C\cup\lbrace i_1\rbrace\subset A, B$. To stop $A$ (or $B$), $C^*$, $A^*$ and $B^*$ from forming a butterfly in $\mathcal F$, we need both $A$ and $B$ to be comparable to $B^*$ and the only option is $B^*\subset A, B$. We notice that we are now in the previous case where $C\cup\lbrace i_1\rbrace$ is the minimal element of a butterfly, namely the one formed with $A$, $B$ and $B^*$.
\end{enumerate}
We therefore conclude that we can associate every singleton (not in our family) with a chevron, and no two singletons are associated with the same chevron.\\The next step is to show that $C\cup\lbrace i\rbrace\in\mathcal F$ where $C$ is the maximal element of the chevron assigned to the singleton $\lbrace i\rbrace\notin\mathcal F$. Assume that $C\cup\lbrace i\rbrace\notin\mathcal F$ and as before, it will have to form a butterfly with three elements of $\mathcal F$.
\begin{enumerate}
\item Case 1: $C\cup\lbrace i\rbrace$ is one of the minimal elements of the butterfly. Assume $C^*$ is the other minimal element and $A^*, B^*$ are the two maximal incomparable elements. The same argument we used above for $C\cup\lbrace i_1\rbrace$ will tell us that $C\subset C^*$, $i\notin C^*$ and $i\in A^*\cap B^*$, contradicting the maximality of $|C|$.
\item Case 2: $C\cup\lbrace i\rbrace$ is one of the maximal elements of the butterfly, $B^*$ is the other maximal element and $A^*$ and $C^*$ are the two incomparable minimal elements. Since $C\cup\lbrace i\rbrace\subset A,B$, but is not equal to either of them (if for example $A=C\cup\lbrace i\rbrace$, then $A\subset B$ which cannot happen), the same arguments as above will tell us that $B^*\subset A,B$, which leads us back to the first case.  	
\end{enumerate}
Therefore we indeed have $C\cup\lbrace i\rbrace\in\mathcal F$.\\
Let us now define the following function from $[n]$ to elements of $\mathcal F$: \begin{center}
$i\longmapsto\begin{cases}
\lbrace i\rbrace & \text{ if }\lbrace i\rbrace\in\mathcal F\\
C\cup\lbrace i\rbrace & \text{ if }\lbrace i\rbrace\notin\mathcal F,	
\end{cases}
$	
\end{center}
where $C$ is the minimal element of the chevron assigned to the singleton not in the family. By what we just proved above, this is a well-defined function from $[n]$ to $\mathcal F$.\\We claim that this function is an injection. Since $C\cup\lbrace i\rbrace$ cannot be a singleton and the function is obviously injective on singletons, we only need to show that $\bar C\cup\lbrace i_1\rbrace\neq C\cup\lbrace i\rbrace$ if $i\neq i_1$.\\
If $\bar C\cup\lbrace i_1\rbrace=C\cup\lbrace i\rbrace$, then $C\neq \bar C$ since $i\neq i_1$, but they have the same cardinality which tells us that they are incomparable. Let $(A,B,C)$ be the chevron $C$ is originating from. By construction we have that $C\cup\lbrace i\rbrace\subset A,B$. It would then follow that $\bar C\subset A,B$ which immediately implies that $A,B,C,\bar C$ would form a butterfly in $\mathcal F$, contradiction.\\
Hence, the above function is indeed an injection from $[n]$ to non-empty elements of $\mathcal F$. It is easy to see that if we add the empty set to $\mathcal F$, it cannot form a butterly as it is comparable to everything, thus by saturation $\emptyset\in\mathcal F$.\\
We therefor conclude that $|\mathcal F|\geq n+1$ for every butterfly-saturated family, implying $\text{sat}^*(n, \mathcal B)\geq n+1$, as claimed.
\end{proof}
\end{theorem}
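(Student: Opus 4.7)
The plan is to build an injection $[n] \to \mathcal F$ and separately observe that $\emptyset \in \mathcal F$, which together yield $|\mathcal F| \geq n+1$. For each singleton $\{i\} \notin \mathcal F$, saturation forces a butterfly upon adding $\{i\}$, and that butterfly must involve $\{i\}$. Since $\{i\}$ has no two incomparable sets strictly below it, it must sit at a minimal position, which exhibits a chevron $(A,B,C)$ with $A,B,C \in \mathcal F$, $A \parallel B$, $C \subset A \cap B$, $\{i\} \subset A \cap B$, and $\{i\} \parallel C$ (so in particular $i \notin C$). Among all such chevrons witnessing $\{i\}$ I would pick one maximising $|C|$, and then send $i$ to $C \cup \{i\}$ (or to $\{i\}$ itself when $\{i\}\in \mathcal F$).

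Two key facts need to be proved by essentially the same case analysis. The first is that $C \cup \{i\}$ actually lies in $\mathcal F$. If not, saturation forces a butterfly involving $C\cup\{i\}$: if it appears as a minimal element, then the other minimal element $C^*$ must strictly contain $C$ (with $i \notin C^*$ but $i$ in both maximals, because otherwise we could exchange to obtain a butterfly already in $\mathcal F$), contradicting the maximality of $|C|$; if it appears as a maximal element, the other maximal $B^*$ must lie in both $A$ and $B$ to avoid an obvious pre-existing butterfly, and this reduces to the minimal-element case. The second fact is that no two distinct singletons $\{i\}, \{i_1\} \notin \mathcal F$ can be assigned the same chevron $(A,B,C)$: if they were, the set $C \cup \{i_1\}$ would be incomparable to $\{i\}$, sit strictly between $C$ and $A \cap B$, and by maximality could not be in $\mathcal F$, so the same dichotomy either yields a strictly larger valid chevron for $\{i_1\}$ or a butterfly already present in $\mathcal F$. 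The main obstacle is organising this case analysis cleanly and identifying, in each subcase, exactly which four pre-existing sets would constitute an unwanted butterfly.

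With those two facts in place, injectivity of the map $f: i \mapsto \{i\}$ or $C_i \cup \{i\}$ follows quickly. It is trivially injective on the singletons, and no image of the second form is a singleton, so the only collision to rule out is $C_i \cup \{i\} = C_{i_1} \cup \{i_1\}$ with $i \neq i_1$: then $C_i$ and $C_{i_1}$ have equal cardinality but differ, hence are incomparable, and both are contained in the two maximal elements $A, B$ of the chevron witnessing $i$, producing a butterfly $A, B, C_i, C_{i_1}$ already in $\mathcal F$. Finally, $\emptyset \in \mathcal F$ because $\emptyset$ is comparable to every set and therefore cannot appear in any butterfly, so saturation must place it in the family. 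Combining the injection with this extra element gives $|\mathcal F| \geq n+1$, as required.
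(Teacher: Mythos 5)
Your proposal is correct and follows essentially the same route as the paper: the chevron $(A,B,C)$ with $|C|$ maximal assigned to each missing singleton, the two facts that $C\cup\{i\}\in\mathcal F$ and that distinct singletons get distinct chevrons (each proved by the same minimal/maximal case split, with the maximal case reducing to the minimal one), the injectivity argument via the butterfly $A,B,C,\bar C$, and the observation that $\emptyset\in\mathcal F$. The case analysis you defer is exactly the one the paper carries out, and your stated contradictions in each subcase are the right ones.
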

\section{Further analysis of singletons}
In Section 2 we looked at singletons that are not in our family and constructed an injection from them to the set of chevrons with elements in $\mathcal F$. Because of the crucial role singletons played in the above proof, it is of interest to see what more can be said about the number of singletons in the family. In this section we use the same techniques and look at pairs that are not in our family. This provides us with better bounds in the case when we have few singletons in the family. 
\begin{theorem} Let $\mathcal F$ be a $\mathcal B$-saturated family containing $k\geq 1$ singletons. Then $|\mathcal F|\geq\binom{k}{2}+k(n-k)$.
\begin{proof} From Lemma 1 we already know that $\mathcal F$ contains the $\binom{k}{2}$ pairs made out of singletons of $\mathcal F$ only. We look at pairs $\lbrace i,j\rbrace\notin\mathcal F$. Also from Lemma 1 we get that at least one of the singletons in this pair is not in our family. We now restrict our attention to pairs $\lbrace i,j\rbrace$ that are not in $\mathcal F$ and for which exactly one of the singletons $\lbrace i\rbrace$ and $\lbrace j\rbrace$ is in $\mathcal F$. As in the previous chapter, we will show that we can uniquely assign a chevron to these pairs and construct an injection from the set of pairs containing at least one singleton of $\mathcal F$, to $\mathcal F$.\\
\\As before, if $\lbrace i,j\rbrace\notin\mathcal F$, then $\mathcal F\cup\lbrace i,j\rbrace$ contains a butterfly involving the pair $\lbrace i,j\rbrace$, and that this pair has to be one of the minimal elements. This is because if it is one of the maximal elements, the only candidates for the two incomparable minimal elements are the singletons made out of its elements. But we know that one must not be in our family and so this cannot happen. We then have a butterfly formed as shown below.\\
\begin{center}
\begin{tikzpicture}
\node (A) at (-2,0) {\textbullet};
\node [above=0cm of A] {$A$};
\node (B) at (2,0) {\textbullet};
\node [above=0cm of B] {$B$};
\node (extra) at (-2, -2) {\textbullet};
\node [below=0cm of extra] {$\lbrace i, j\rbrace$};
\node (C) at (2, -2) {\textbullet};
\node [below=0cm of C] {$C$};
\node (extrab) at (5, -2) {\textbullet};
\node [below=0cm of extrab] {$\lbrace i_1, j_1\rbrace$};
\draw (A)--(C)--(B)--(extra)--(A)--(extrab)--(B);
\end{tikzpicture}
\end{center}
Among all these configurations, we choose the one having $|C|$ maximal and assign the chevron $(A,B,C)$ to the pair $\lbrace i, j\rbrace$. We have to show that this assignment is an injection for the pairs we are considering.\\
Assume that under this construction, the same chevron has been assigned to a different pair $\lbrace i_1, j_1\rbrace$.
We have the following two cases to analyse: 
\begin{enumerate}
\item $\lbrace i, j\rbrace\parallel C\cup\lbrace i_1, j_1\rbrace$, then $C\cup\lbrace i_1,j_1\rbrace\notin \mathcal F$ by the maximality of the chevron assigned to $\lbrace i, j\rbrace$. This means that $C\cup\lbrace i_1,j_1\rbrace$ will form a butterfly in $\mathcal F$.\begin{enumerate}
\item Case 1: $C\cup\lbrace i_1, j_1\rbrace$ is one of the minimal elements of the butterfly as shown in the diagram below, where $A^*, B^*, C^*\in\mathcal F$. \begin{center}
\begin{tikzpicture}
\node (A) at (-2,0) {\textbullet};
\node [above=0cm of A] {$A^*$};
\node (B) at (2,0) {\textbullet};
\node [above=0cm of B] {$B^*$};
\node (extra) at (-2, -2) {\textbullet};
\node [left=0cm of extra] {$C\cup\lbrace i_1, j_1\rbrace$};
\node (C) at (2, -2) {\textbullet};
\node [below=0cm of C] {$C^*$};
\node (extra2) at (-2, -4) {\textbullet};
\node [below=0cm of extra2] {$C$};
\draw (A)--(C)--(B)--(extra)--(A);
\draw (extra)--(extra2);
\end{tikzpicture}
\end{center}
But $A^*, B^*, C^*, C$ have to not form a butterfly, therefore $C$ and $C^*$ are comparable and thus $C\subseteq C^*$. Because $C^*\parallel C\cup\lbrace i_1, j_1\rbrace$, $C\neq C^*$ and $\lbrace i_1, j_1\rbrace\parallel C^*$. Therefore the chevron $(A^*, B^*, C^*)$ can be assigned to $\lbrace i_1, j_1\rbrace$ and the size of $C^*$ is strictly greater than the size of $C$, which is a contradiction.
\item Case 2: $C\cup\lbrace i_1, j_1\rbrace$ is one of the maximal elements as shown below. \begin{center}
\begin{tikzpicture}
\node (A) at (2,-2) {\textbullet};
\node [below=0cm of A] {$A^*$};
\node (B) at (2,0) {\textbullet};
\node [above=0cm of B] {$B^*$};
\node (extra) at (-2, 0) {\textbullet};
\node [left=0cm of extra] {$C\cup\lbrace i_1, j_1\rbrace$};
\node (C) at (-2, -2) {\textbullet};
\node [below=0cm of C] {$C^*$};
\node (extra2) at (-4, 2) {\textbullet};
\node [above=0cm of extra2] {$A$};
\node (extra3) at (0, 2) {\textbullet};
\node [above=0cm of extra3] {$B$};
\draw (B)--(extra2)--(extra)--(extra3)--(B);
\draw (extra)--(A)--(B)--(C)--(extra);
\end{tikzpicture}
\end{center}
We obviously have $C\cup\lbrace i_1, j_1\rbrace\subset A, B$. To stop $A$ (or $B$), $C^*$, $A^*$ and $B^*$ from forming a butterfly in $\mathcal F$, we need both $A$ and $B$ to be comparable to $B^*$ and the only option is $B^*\subset A, B$. We notice that we are now in the previous case where $C\cup\lbrace i_1, j_1\rbrace$ is the minimal element of a butterfly, namely the one formed with $A$, $B$ and $B^*$. 
\end{enumerate}
\item $\lbrace i,j\rbrace$ is comparable to $C\cup\lbrace i_1, j_1\rbrace$ and $\lbrace i_1, j_1\rbrace$ is comparable to $C\cup\lbrace i,j\rbrace$. By cardinality, the only options are $\lbrace i, j\rbrace\subset C\cup\lbrace i_1, j_1\rbrace$ and $\lbrace i_1, j_1\rbrace\subset C\cup\lbrace i, j\rbrace$. Because $\lbrace i, j\rbrace\parallel C$, we need $\lbrace i,j\rbrace\cap\lbrace i_1, j_1\rbrace\neq\emptyset$ and wlog $i=i_1$ and $j\neq j_1$. It then follows that $j,j_1\in C$ and thus $i\notin C$. Because $i\notin C$, we cannot have $\lbrace i\rbrace\in\mathcal F$, otherwise $A, B, C, \lbrace i\rbrace$ will form a butterfly in $\mathcal F$. Thus, since the pairs we are analysing consist of exactly one singleton in $\mathcal F$, we obtain that $\lbrace j\rbrace$ and $\lbrace j_1\rbrace$ are elements of $\mathcal F$. But then we obtain $A,B,\lbrace j\rbrace, \lbrace j_1\rbrace$ a butterfly in $\mathcal F$, which is a contradiction. 
\end{enumerate}
We will construct an injection from the set of pairs $\lbrace i,j\rbrace$ with at least one singleton in $\mathcal F$ to elements of $\mathcal F$. We know that we can assign a unique chevron to every such pair that is not in our butterfly-saturated family.\\Let $\lbrace i, j\rbrace$ be such a pair and $(A, B, C)$ its chevron. If $C\cup\lbrace i, j\rbrace\notin \mathcal F$, then it has to form a butterfly when added to the family.
\begin{enumerate}
\item Case 1: $C\cup\lbrace i,j\rbrace$ is one of the minimal elements of the butterfly. \begin{center}
\begin{tikzpicture}
\node (A) at (-2,0) {\textbullet};
\node [above=0cm of A] {$A^*$};
\node (B) at (2,0) {\textbullet};
\node [above=0cm of B] {$B^*$};
\node (extra) at (-2, -2) {\textbullet};
\node [left=0cm of extra] {$C\cup\lbrace i,j\rbrace$};
\node (C) at (2, -2) {\textbullet};
\node [below=0cm of C] {$C^*$};
\node (extra2) at (-2, -4) {\textbullet};
\node [below=0cm of extra2] {$C$};
\draw (A)--(C)--(B)--(extra)--(A);
\draw (extra)--(extra2);
\end{tikzpicture}
\end{center}
As before, it then follows that $C\subset C^*$ and $\lbrace i,j\rbrace\parallel C^*$, and thus the chevron $(A^*, B^*, C^*)$ could have been assigned to the pair, contradicting the maximality of the minimal element of the chevron.
\item Case 2: $C\cup\lbrace i,j\rbrace$ is one of the maximal elements of the butterfly. It then follows by the same arguments we used before, that $C\cup\lbrace i,j\rbrace$ will be one of the minimal elements in a butterfly containing $A$, $B$ and $B^*$ (as we can see on the diagram below), thus returning to the previous case.
\end{enumerate} \begin{center}
\begin{tikzpicture}
\node (A) at (2,-2) {\textbullet};
\node [below=0cm of A] {$A^*$};
\node (B) at (2,0) {\textbullet};
\node [above=0cm of B] {$B^*$};
\node (extra) at (-2, 0) {\textbullet};
\node [left=0cm of extra] {$C\cup\lbrace i,j\rbrace$};
\node (C) at (-2, -2) {\textbullet};
\node [below=0cm of C] {$C^*$};
\node (extra2) at (-4, 2) {\textbullet};
\node [above=0cm of extra2] {$A$};
\node (extra3) at (0, 2) {\textbullet};
\node [above=0cm of extra3] {$B$};
\draw (B)--(extra2)--(extra)--(extra3)--(B);
\draw (extra)--(A)--(B)--(C)--(extra);
\end{tikzpicture}
\end{center}
Therefore we must have $C\cup\lbrace i, j\rbrace\in\mathcal F$, and we define the following function from the set of pairs containing at least one singleton in the family:\begin{center}
$\lbrace i, j\rbrace\longmapsto\begin{cases}
\lbrace i, j\rbrace & \text{ if }\lbrace i, j\rbrace\in\mathcal F\\
C\cup\lbrace i,j\rbrace & \text{ if }\lbrace i,j\rbrace\notin\mathcal F,	
\end{cases}
$	
\end{center}
where $C$ is the minimal element of the chevron assigned to the pair not in the family. By what we just proved above, this is a well-defined function from this set of pairs to $\mathcal F$.\\
Because in the second case $C\parallel\lbrace i,j\rbrace$, $|C\cup\lbrace i, j\rbrace|\geq 3$, and thus, in order to prove injectivity, we need to show that $\bar{C}\cup\lbrace i_1, j_1\rbrace\neq C\cup\lbrace i, j\rbrace$ for any two pairs $\lbrace i, j\rbrace\neq\lbrace i_i, j_1\rbrace$ with the desired property that are not in our family $\mathcal F$.\\
Assume that we do have $\bar{C}\cup\lbrace i_1, j_1\rbrace=C\cup\lbrace i, j\rbrace$ for two different pairs.
\begin{enumerate}
\item Case 1: $|C|=|\bar{C}|$. If $C=\bar C$ and $i$ is the element not in $C$, then $i$ is an element of the other pair. Now we have the equality $C\cup\lbrace i, j\rbrace=C\cup\lbrace i, j_1\rbrace$ with $j\neq j_1$. This immediately implies that $j, j_1\in C$, and if $(A,B,C)$ is the chevron assigned to $\lbrace i, j\rbrace$, then the same chevron can be assigned to $\lbrace i, j_1\rbrace$, contradicting the uniqueness of the chevrons for this type of pairs. This is because $\lbrace i, j_1\rbrace\subseteq C\cup\lbrace i, j\rbrace\subset A,B$ and $C\parallel\lbrace i,j_1\rbrace$ as $i\notin C$. Therefore $C\neq\bar C$ and since they have the same cardinality, $C\parallel\bar C$. But if $(A,B,C)$ is the chevron corresponding to $\lbrace i, j\rbrace$, then $\bar C\subset C\cup\lbrace i,j\rbrace\subseteq A,B$, thus $A,B,C,\bar C$ forms a butterfly in $\mathcal F$, which is a contradiction.  
\item Case 2: $|C|\neq|\bar C|$ and wlog, $|C|<|\bar C|$. Because adding the pair increases the size by at least one and at most two, we find that $|\bar C|=|C|+1$. This also means that $C\cap\lbrace i, j\rbrace=\emptyset$ and $|\bar C\cap\lbrace i_1, j_1\rbrace|=1$. If $C\parallel\bar C$, then we form a butterfly in $\mathcal F$ with the chevron where $\bar C$ is coming from. If they are comparable, then $C\subset\bar C$ and consequently $\lbrace i,j\rbrace\parallel \bar C$. Moreover, if $(\bar A, \bar B, \bar C)$ is the chevron for $\lbrace i_1, j_1\rbrace$, then $\lbrace i, j\rbrace\subset\bar C\cup\lbrace i_i, j_1\rbrace\subseteq\bar A, \bar B$, and thus $\lbrace i, j\rbrace$ can be assigned the chevron $(\bar A, \bar B,\bar C)$ and the size of $\bar C$ is strictly greater than the size of $C$, which contradict the choice of the chevron. 
\end{enumerate}
Therefore our function is an injection from the set of pairs containing at least one singleton from the family, to elements of $\mathcal F$. We have exactly $\binom{k}{2}+k(n-k)$ such pairs, giving $|\mathcal F|\geq\binom{k}{2}+k(n-k)$ as claimed.
\end{proof}
\end{theorem}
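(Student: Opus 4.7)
The plan is to adapt the chevron-assignment machinery of Theorem~2 to pairs, rather than singletons. By Lemma~1, all $\binom{k}{2}$ pairs formed from the $k$ singletons in $\mathcal F$ already lie in $\mathcal F$, so it suffices to exhibit an injection into $\mathcal F$ from the set of pairs $\{i,j\}$ such that exactly one of $\{i\},\{j\}$ belongs to $\mathcal F$; there are $k(n-k)$ such pairs, giving the claimed bound $\binom{k}{2}+k(n-k)$.

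For such a pair $\{i,j\}\notin\mathcal F$, adding it must complete a butterfly, and I would first argue that $\{i,j\}$ can only occur as a minimal element of that butterfly: were it maximal, the two incomparable minimals below it would have to be $\{i\}$ and $\{j\}$, which is impossible because one of these singletons is missing from $\mathcal F$. Thus $\{i,j\}$ sits at the bottom of a butterfly with top elements $A,B$ and other minimum $C\in\mathcal F$, where $\{i,j\}\subset A,B$, $C\subset A,B$, $A\parallel B$ and $C\parallel\{i,j\}$. Among all such configurations I would pick one with $|C|$ as large as possible and assign the chevron $(A,B,C)$ to $\{i,j\}$.

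The core work is to verify that this assignment is injective on the relevant pairs. Suppose two distinct pairs $\{i,j\}$ and $\{i_1,j_1\}$ receive the same chevron $(A,B,C)$. I would split on whether $\{i,j\}\parallel C\cup\{i_1,j_1\}$ or not. In the incomparable case, maximality of $|C|$ forces $C\cup\{i_1,j_1\}\notin\mathcal F$, so it in turn completes a butterfly; mirroring the two subcases (minimal/maximal element) of Theorem~2, I can always promote to a strictly larger chevron assignable to one of the pairs, a contradiction. In the comparable case, cardinality forces $\{i,j\}\subset C\cup\{i_1,j_1\}$ and symmetrically, so the pairs share an element, say $i=i_1$; the remaining elements $j,j_1$ must then lie in $C$, so $i\notin C$, and the hypothesis that exactly one singleton of each pair lies in $\mathcal F$ forces $\{j\},\{j_1\}\in\mathcal F$, yielding $A,B,\{j\},\{j_1\}$ as a butterfly inside $\mathcal F$.

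Next, I would show $C\cup\{i,j\}\in\mathcal F$ by the same style of argument: if not, it completes a butterfly, and whether it sits at the top or the bottom of that butterfly, one derives a chevron with strictly larger minimum, contradicting the maximality in the choice of $C$. Finally, I define the map $\{i,j\}\mapsto\{i,j\}$ if $\{i,j\}\in\mathcal F$ and $\{i,j\}\mapsto C\cup\{i,j\}$ otherwise. Since $C\parallel\{i,j\}$ forces $|C\cup\{i,j\}|\geq 3$, there is no collision with pairs sent to themselves; for two pairs both outside $\mathcal F$ with the same image, I would split on whether $|C|=|\bar C|$ (deriving either equality of the chevrons, hence non-uniqueness, or incomparability of $C$ and $\bar C$, hence a butterfly $A,B,C,\bar C$ in $\mathcal F$) or $|C|\neq|\bar C|$ (where the larger chevron turns out to be assignable to the pair with the smaller one, contradicting maximality). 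The principal obstacle will be this last injectivity step, which is where the interaction between the pair cardinality and the chevron size really has to be tracked carefully; the preliminary cases are guided closely by the singleton proof of Theorem~2.
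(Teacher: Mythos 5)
Your proposal is correct and follows the paper's argument essentially step for step: the same chevron assignment to pairs with $|C|$ maximal, the same two-case injectivity analysis (including the comparable case ending in the butterfly $A,B,\lbrace j\rbrace,\lbrace j_1\rbrace$), the same verification that $C\cup\lbrace i,j\rbrace\in\mathcal F$, and the same final map and counting. The only detail left implicit is why, in the comparable case, the singletons of the two pairs lying in $\mathcal F$ must be $\lbrace j\rbrace$ and $\lbrace j_1\rbrace$ rather than $\lbrace i\rbrace$: since $i\notin C$ and $\lbrace i\rbrace\subset A,B$, having $\lbrace i\rbrace\in\mathcal F$ would already give the butterfly $A,B,C,\lbrace i\rbrace$ in $\mathcal F$, which is exactly how the paper closes that step.
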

\par Note that if the number of singletons in $\mathcal F$ is $\Theta(n^{\alpha})$ for some $\alpha\in(0,1)$, Theorem 3 gives us a better bound than both Lemma 1 and Theorem 2. Lemma 1 gives us $\Theta(n^{2\alpha})$ elements in $\mathcal F$ and Theorem 2 gives $n+1$. On the other hand, Theorem 3 gives us $\Theta(n^{1+\alpha})$ elements in $\mathcal F$, which beats both of the previous bounds. 

\section{An improved bound on $\text{sat}^*(n,\mathcal N)$}
We will show that the saturation number for the poset $\mathcal N$ is at least $\sqrt n$. The key point is that in every $\mathcal N$-saturated family we can find an ordered pair $(F,G)$ such that $F\setminus G=\lbrace i\rbrace$ for every $i\in[n]$. This approach was also used by Martin, Smith and Walker \cite{martin2019improved} in their analysis of the saturation number of another poset, the diamond.
\begin{proposition} Let $\mathcal{F}$ be a $\mathcal{N}$-saturated family. Then $|\mathcal{F}|\geq\sqrt{n}$.
\begin{proof} We will show that for any $F$ in the family and for every $i\in F$, there exist sets $A$ and $B$ in $\mathcal{F}$ such that $A\subseteq F$ and $A\setminus B=\lbrace i\rbrace$. Since the poset $\mathcal N$ is invariant under taking complements, this will also tell us that for every $j\notin F$, there exists two sets $C$ and $D$ in $\mathcal F$ such that $F\subseteq C$ and $D\setminus C=\lbrace j\rbrace$. From this it would immediately follow that $|\mathcal F|\geq\sqrt n$, since by fixing an $F\in\mathcal F$ we can assign an ordered pair $(A,B)$ to every $i\in\lbrack n\rbrack$ with the property that $A\setminus B=\lbrace i\rbrace$, and $A,B\in\mathcal F$.\\
\par Let $F\in\mathcal F$ and $i\in F$. If there exist a set $A\in\mathcal F$ with $A\subseteq F$, $i\in A$ and $A\setminus\lbrace i\rbrace\in\mathcal F$, then we are done. Now suppose that no such $A$ exists and consider an element of the set $\lbrace A\in\mathcal F:i\in A, A\subseteq F\rbrace$ of minimal size, which we call $F^*$. We have that $F^*\setminus\lbrace i\rbrace\notin\mathcal F$ and thus it has to form a copy of $\mathcal N$ with thee other elements of $\mathcal F$.\begin{enumerate}
\item Case 1: $F^*\setminus\lbrace i\rbrace$ is one of the maximal elements, then we are in one of the two cases shown below.\\
\begin{center}
\begin{tikzpicture}
\node (extra) at (1,2) {\textbullet};
\node [above=0cm of extra] {$F^*$};

\node (max1) at (-1,1) {\textbullet};

\node [above=0cm of max1] {$A$};

\node (max21) at (6,1) {\textbullet};

\node [left=0cm of max21] {$F^{*}\setminus\lbrace i\rbrace$};

\node (max22) at (8,1) {\textbullet};
\node [above=0cm of max22] {$A$};
\node (min21) at (6,-1) {\textbullet};
\node [below=0cm of min21] {$B$};
\node (min22) at (8,-1) {\textbullet};
\node [below=0cm of min22] {$C$};
\node (max2) at (1,1) {\textbullet};
\node [right=0cm of max2]  {$F^{*}\setminus\lbrace i\rbrace$};
\node (min1) at (-1,-1) {\textbullet};
\node [below=0cm of min1] {$B$};
\node (min2) at (1,-1) {\textbullet};
\node [below=0cm of min2] {$C$};
\node (extra2) at (6,2) {\textbullet};
\node [above=0cm of extra2] {$F^*$};

\draw (min1)--(max1)--(min2)--(max2)--(extra);
\draw (min21)--(max21)--(min22)--(max22);
\draw (extra2)--(max21);
\end{tikzpicture}\end{center}
Because we cannot have $A,B,C,F^{*}$ forming a copy of $\mathcal N$ in $\mathcal F$, it follows that in both cases $A$ and $F^{*}$ are comparable. We cannot have $F^{*}\subseteq A$ as $F^{*}\setminus\lbrace i\rbrace\parallel A$, therefore we must have $A\subseteq F^{*}$. Because $F^{*}\setminus\lbrace i\rbrace\parallel A$, $i\in A$ and $A\neq F^{*}$. This implies that $A$ is a proper subset of $F^*$ and thus of $F$, and element of the family containing $i$. This contradicts minimality of $F^{*}$.
\item Case 2: $F^{*}\setminus\lbrace i\rbrace$ is one of the minimal elements, then we are in the following two cases shown below.\\

\begin{center}\begin{tikzpicture}
\node (max1) at (-1,1) {\textbullet};
\node [above=0cm of max1] {$A$};
\node (max21) at (6,1) {\textbullet};
\node [above=0cm of max21] {$C$};
\node (max22) at (8,1) {\textbullet};
\node [above=0cm of max22] {$A$};
\node (min21) at (6,-1) {\textbullet};
\node [below=0cm of min21] {$B$};
\node (min22) at (8,-1) {\textbullet};
\node [below=0cm of min22] {$F^{*}\setminus\lbrace i\rbrace$};
\node (f1) at (7,-2.5) {Figure 2};
\node (f2) at (0,-2.5) {Figure 1};
\node (max2) at (1,1) {\textbullet};
\node [above=0cm of max2] {$B$};
\node (min1) at (-1,-1) {\textbullet};
\node [below=0cm of min1] {$F^{*}\setminus\lbrace i\rbrace$};
\node (min2) at (1,-1) {\textbullet};
\node [below=0cm of min2] {$C$};

\draw (min1)--(max1)--(min2)--(max2);
\draw (min21)--(max21)--(min22)--(max22);
\end{tikzpicture}\end{center}
Similarly as before, $A, F^{*}, B, C$ does not form a copy of $\mathcal N$.\\
\\
In the first case (Figure 1), if $i\notin A$, then $F^*\setminus A=\lbrace i\rbrace$, which gives the pair $(F^*,A)$.\\
If $i\in A$, then $F^*\subseteq A$, and either $A=F^*$ or $F^*\subset A$.\\If $A=F^{*}$, then $C$ and $F^*\setminus\lbrace i\rbrace$ are incomparable, while $C\subset F^*$. Thus $i\in C$, contradicting the minimality of $F^*$.\\If $F^*\subset A$ then, in order not to create a copy of $\mathcal N$ in $\mathcal F$, $F^*$ must be comparable to at least one of $B$ or $C$ (which are different from $F^*$ since they are incomparable to $F^*\setminus\lbrace i\rbrace$). Thus $B\subset F^*$ or $C\subset F^*$. Since $C\subset B$, we can assume wlog that $C\subset F^*$, which implies that $i\in C$, contradicting the minimality of $F^*$ again.\\
\\
In the second case (Figure 2), if $i\notin A$ or $i\notin C$, then similarly as above, we would find the pair $(F^*, A)$ or $(F^*, C)$. So we can assume that $i\in A\cap C$.\\If $F^*$ is different from both $A$ and $C$, then to avoid a copy of $\mathcal N$, $F^*$ and $B$ have to be comparable, and since $B\parallel F^*\setminus\lbrace i\rbrace$, we have to have $B\subset F^*$. This gives again $i\in B$ and thus contradicting minimality of $F^*$.\\
If $C=F^*$, then we find that $B\subset F^*$ and $i\in B$, leading to the same contradiction. If $A=F^*$, then $F^*\parallel C$ and $F^{*}\setminus\lbrace i\rbrace\subset C$, so $i\notin C$ and $F^*\setminus C=\lbrace i\rbrace$, which gives the pair $(F^*, C)$ and completes the proof.
\end{enumerate}
\end{proof}
\end{proposition}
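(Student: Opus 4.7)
The hint embedded in the proposition is already very directive: produce, for each $i\in[n]$, an ordered pair $(F_i,G_i)\in\mathcal F\times\mathcal F$ with $F_i\setminus G_i=\{i\}$. Because $i$ is recoverable from the pair, the map $i\mapsto(F_i,G_i)$ is an injection from $[n]$ into $\mathcal F\times\mathcal F$, giving $|\mathcal F|^2\geq n$ and therefore $|\mathcal F|\geq\sqrt n$. So the entire task reduces to producing these pairs.

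I would fix an arbitrary $F\in\mathcal F$ and split according to whether $i\in F$ or $i\notin F$. The poset $\mathcal N$ is self-dual (isomorphic to its order reverse, as one can check by interchanging the two ``peaks''), and consequently $\{[n]\setminus S:S\in\mathcal F\}$ is also $\mathcal N$-saturated. So if I can establish the following core claim---\emph{for every $F\in\mathcal F$ and every $i\in F$ there exist $A,B\in\mathcal F$ with $A\subseteq F$ and $A\setminus B=\{i\}$}---then the case $i\notin F$ is handled automatically by applying the claim to the complement family and the complementary set $[n]\setminus F$.

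To prove the core claim, fix $F\in\mathcal F$ and $i\in F$ and look at $\mathcal S=\{A\in\mathcal F:i\in A,\ A\subseteq F\}$, which is non-empty as it contains $F$ itself; pick $F^*\in\mathcal S$ of minimum size. If $F^*\setminus\{i\}\in\mathcal F$ I am done with $B=F^*\setminus\{i\}$. Otherwise saturation forces $F^*\setminus\{i\}$ to complete an induced copy of $\mathcal N$ together with three sets of $\mathcal F$, and $F^*\setminus\{i\}$ sits in one of the four positions of $\mathcal N$. In each position I would substitute $F^*$ back in place of $F^*\setminus\{i\}$ and use the fact that $\mathcal F$ is itself $\mathcal N$-free to force comparability or containment relations between $F^*$ and the remaining three sets. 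Combining these with bookkeeping of which of the three sets contain $i$ should, in every sub-case, either produce an element of $\mathcal S$ strictly smaller than $F^*$ (contradicting minimality) or directly expose a pair $A\subseteq F$, $B\in\mathcal F$ with $A\setminus B=\{i\}$.

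The genuinely delicate part of the argument will be the two sub-cases where $F^*\setminus\{i\}$ is a minimal element of the induced $\mathcal N$. There the three accompanying sets can sit in several configurations, and the replacement $F^*\setminus\{i\}\leadsto F^*$ does not instantly break incomparability. I expect to have to analyse, for each of the ``low'' sets in the copy, whether $i$ lies in it or not: if $i$ is absent from one of them I hope to read off the desired pair directly (that set plays the role of $B$), while if $i$ is in all of them I hope to promote one of them to a strictly smaller element of $\mathcal S$ and contradict the minimality of $F^*$. Making this split clean across all sub-configurations is the main obstacle I anticipate.
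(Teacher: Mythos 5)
Your skeleton is exactly the paper's: the injection $i\mapsto(F_i,G_i)$ giving $n\le|\mathcal F|^2$, the reduction of the case $i\notin F$ to the case $i\in F$ via complementation and the self-duality of $\mathcal N$, and the core claim attacked by taking $F^*$ of minimum size in $\{A\in\mathcal F: i\in A,\ A\subseteq F\}$ and letting saturation force $F^*\setminus\{i\}$ into a copy of $\mathcal N$. But what you have written is a plan, not a proof: the entire content of the proposition lives in the case analysis that you explicitly defer (``I expect to have to analyse\dots'', ``the main obstacle I anticipate''). Until those cases are closed, the core claim --- and hence the bound --- is unproved, and the analysis is not a routine verification: it occupies most of the paper's argument.

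Two concrete points show that your anticipated dichotomy, as stated, would not go through verbatim. First, you propose to examine the ``low'' sets of the copy and, when one of them omits $i$, let it ``play the role of $B$'' in the pair; but with $A=F^*$ the set $B$ must satisfy $F^*\setminus\{i\}\subseteq B$ and $i\notin B$, and a minimal element of the copy other than $F^*\setminus\{i\}$ is \emph{incomparable} to $F^*\setminus\{i\}$, so it can never contain it. The sets from which the pair is read off are the maximal elements of the copy lying \emph{above} $F^*\setminus\{i\}$ (when they omit $i$), while minimality of $F^*$ is contradicted by sets that get forced strictly \emph{inside} $F^*$ (and then must contain $i$ because they are incomparable to $F^*\setminus\{i\}$). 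Second, the copy of $\mathcal N$ may reuse $F^*$ itself as one of its three members of $\mathcal F$ (it is comparable to $F^*\setminus\{i\}$), so ``substituting $F^*$ for $F^*\setminus\{i\}$ and invoking $\mathcal N$-freeness'' degenerates there; the paper treats these sub-cases separately, and in one of them (the configuration where $F^*$ is the maximal element above $F^*\setminus\{i\}$ together with a second, incomparable maximal element $C\supset F^*\setminus\{i\}$) the outcome is not a contradiction to minimality at all but the pair $(F^*,C)$ itself, since $F^*\parallel C$ forces $i\notin C$ and $F^*\setminus C=\{i\}$. A further wrinkle you do not anticipate: in the sub-case $F^*\subset A$ strictly, $\mathcal N$-freeness only makes $F^*$ comparable to at least one of the two remaining sets, and one needs the internal relation $C\subset B$ of the copy to conclude wlog $C\subset F^*$ and finish. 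So the route is the right one, but the proof has a genuine gap precisely where the work is.
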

\section{Extensions and further work}
In this final section we look at two more general families of posets that include the butterfly, and construct general saturated families.\\
\par We call the poset having two maximal incomparable elements and $k$ minimal incomparable elements, all of which are less than both maximal elements, a $K_{2,k}$.
\begin{center}
\begin{tikzpicture}
\node (m1) at (-1,0) {$\bullet$};
\node (m2)at (1,0) {$\bullet$};
\node (m3) at (-3,-1) {$\bullet$};
\node (m4) at (-2,-1) {$\bullet$};
\node (m6) at (-1,-1) {$\bullet$};
\node (m5) at (0,-1) {...};
\node (m7) at (1,-1) {$\bullet$};
\node (m8) at (2,-1) {$\bullet$};
\node (m9) at (3,-1) {$\bullet$};
\draw (m1)--(m3)--(m2)--(m4)--(m1)--(m6)--(m2)--(m7)--(m1)--(m8)--(m2)--(m9)--(m1);
\end{tikzpicture}
\end{center}
We are now interested in the induced saturated number of a $K_{2,k}$ with ground set $\lbrack n\rbrack$. First, we construct a $K_{2,k}$ saturated family of size $O(n^k)$ with a special structure that will be used later.
\begin{proposition}
$\text{sat}^*(n,K_{2,k})=O(n^k)$.	
\end{proposition}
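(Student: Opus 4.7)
The plan is to exhibit an explicit $K_{2,k}$-saturated family generalising the butterfly-saturated construction from the introduction. The natural candidate is
\[
\mathcal{F}_k = \{S \subseteq [n] : |S| \leq k\} \cup \{\{1, 2, \ldots, i\} : 0 \leq i \leq n\},
\]
with $|\mathcal{F}_k| \leq \sum_{j=0}^{k}\binom{n}{j} + n + 1 = O(n^k)$, matching the claimed bound; should the freeness argument below need tightening, the size threshold in the first part can be reduced to some $f(k) \leq k$ without affecting the $O(n^k)$ estimate.

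To show $\mathcal{F}_k$ contains no induced $K_{2,k}$, I would take any two incomparable $A, B \in \mathcal{F}_k$. Chain sets are pairwise comparable, so at most one of $A, B$ is a chain set, and a short case analysis (small--small, or small--chain) gives $|A \cap B| \leq k - 1$ in every case. Every element of $\mathcal{F}_k$ strictly contained in both $A$ and $B$ then lies in the power set of this $(k-1)$-element intersection, and a Sperner-style antichain bound rules out the $k$ pairwise incomparable minimal elements an induced $K_{2,k}$ would require.

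For saturation, let $S \notin \mathcal{F}_k$; then $|S| \geq k+1$ and $S$ is not a chain set. I would put $A = S$ and choose a chain set $B = \{1, \ldots, m\}$ with $B \parallel S$ and $|B \cap S| \geq k$ as follows: letting $j^*$ be the smallest element of $[n] \setminus S$, take $m = s_k$ (the $k$-th smallest element of $S$) when $j^* \leq k$, and $m = j^*$ otherwise. In either case, the hypotheses $|S| \geq k+1$ and $S$ non-chain guarantee that $B \parallel S$, and $|B \cap S| \geq k$ by construction, with the argument in the first case using that $s_k \geq s_{j^*} > j^*$ and in the second case that $\{1, \ldots, j^* - 1\} \subseteq S$ has size $\geq k$. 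Any $k$ of the singletons $\{j\}$ with $j \in B \cap S$ then lie in $\mathcal{F}_k$, are pairwise incomparable, and are strictly below both $A$ and $B$, yielding the required induced $K_{2,k}$.

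The main obstacle is the freeness step in the extremal regime $|A| = |B| = k$ with $|A \cap B| = k - 1$: this is where the Sperner bound is tightest, and it is here that the size restriction in the construction may need to be sharpened to an $f(k) < k$. The saturation case analysis, by contrast, closely parallels the chevron argument in the proof of Theorem 2 and is more routine.
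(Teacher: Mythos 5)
Your freeness claim is where the argument breaks. For two incomparable $A,B\in\mathcal F_k$ you correctly get $|A\cap B|\le k-1$, but the ``Sperner-style antichain bound'' does not rule out an antichain of size $k$ inside a $(k-1)$-element set: the largest antichain there has size $\binom{k-1}{\lfloor (k-1)/2\rfloor}$, which is at least $k$ as soon as $k\ge 5$. Concretely, for $k=5$ take $A=\{1,2,3,4,5\}$ and $B=\{1,2,3,4,6\}$, both of size $k$ and hence in $\mathcal F_5$; any five of the six $2$-element subsets of $\{1,2,3,4\}$ lie in $\mathcal F_5$, are pairwise incomparable, and are strictly contained in both $A$ and $B$, so $\mathcal F_5$ already contains an induced $K_{2,5}$. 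Thus your explicit family is not $K_{2,k}$-free for $k\ge 5$ (it does work for $k\le 4$, in particular for the butterfly). The hedge you offer --- lowering the size threshold to some $f(k)<k$ --- does not repair this: your saturation argument only produces a copy when the added set $S$ has size at least $k+1$, so once the threshold is lowered, the sets of size between $f(k)+1$ and $k$ are missing from the family and adding one of them need not create any copy. Freeness and saturation pull in opposite directions here, and no single uniform cutoff fixes both.

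The paper avoids exactly this tension. It starts from the much smaller core $\mathcal F_0$ consisting of all singletons together with the chain $\emptyset,\{1\},\{1,2\},\dots,[n]$, which is trivially $K_{2,k}$-free; it proves --- by essentially your large-set argument, which is sound and closely matches the paper's --- that every set of size greater than $k$ forms an induced $K_{2,k}$ when added to $\mathcal F_0$; and it then completes $\mathcal F_0$ greedily, running through the sets of size at most $k$ and adding each one only if doing so keeps the family $K_{2,k}$-free. The resulting family is free by construction and saturated because every omitted small set creates a copy by the greedy step and every large set creates one by the lemma; since all sets added beyond the chain have size at most $k$, one gets $|\mathcal F|\le\sum_{i=0}^{k}\binom{n}{i}+n-k=O(n^k)$. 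This greedy completion is the ingredient missing from your proposal.
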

\begin{proof} We start with $\mathcal F_0$ consisting of all singletons and the chain $\emptyset\cup\lbrace\lbrace 1,2,...,t\rbrace:1\leq t\leq n\rbrace$. It is easy to check that it is $K_{2,k}$-free.\\We first prove that If $M$ is a set of size greater than $k$ and $M\notin\mathcal F_0$, then $\mathcal F_0\cup M$ contains a $K_{2,k}$.\\
Let $t$ be the smallest element not in $M$ and $t_1, t_2,...t_k$ be elements of $M$, none of which is the maximum of $M$. Furthermore, assume that $t_k$ is the maximum of the $k$ elements listed above.\\
If $t=1$, then $M$, $\lbrace 1,2,...t_k\rbrace$ and all singletons $\lbrace t_i\rbrace$ $1\leq i\leq k$ form a $K_{2,k}$. This is obvious since the singletons form an antichain of size $k$, they are all contained in both $M$ and $\lbrace 1,2,...t_k\rbrace$, and $M$ and $\lbrace 1,2,...,t_k\rbrace$ are incomparable since $M$ does not contain 1 and $\lbrace 1,2,...,t_k\rbrace$ does not contain the maximum of $M$.\\
If $t\neq 1$, then $t<$ max($M$) as $M$ is not in $\mathcal F_0$. Let $m=\text{max}\lbrace t,t_k\rbrace<$ max($M$).\\
We then have the following $K_{2,k}$: $M$, $\lbrace 1,2,...m\rbrace$ and $\lbrace t_i\rbrace$, $1\leq i\leq k$. By a similar argument as above, $M$ and $\lbrace 1,2,...m\rbrace$ contain all singletons and are incomparable since $t\notin M$ and $\text{max}(M)\notin\lbrace 1,2,...m\rbrace$.\\
Now let us list the sets of size at most $k$ that are not currently in $F_0$: $M_1$, $M_2$,..., $M_N$.\\We build a $K_{2,k}$-saturated family as follows. We start with $\mathcal F_0$. If $\mathcal F_0\cup M_1$ contains a $K_{2,k}$, then we do not add the set to our family, but if it does not, then we add it. We continue this procedure until we reach the end of the list. Let $\mathcal F$ be the family obtained in the end. It is $K_{2,k}$ free by construction and also saturated as we showed that if $|M|>k$, then $M\cup\mathcal F_0$ contains a $K_{2,k}$, and also the last step ensured that if we add $|M|<k$ to $\mathcal F$, then we form a $K_{2,k}$. Observe that all elements of $\mathcal F$, apart from the original chain appearing in $\mathcal F_0$, have cardinality at most $k$, thus\begin{center}
$\text{sat}^*(n,K_{2,k})\leq|\mathcal F|\leq\displaystyle\sum_{i=0}^k\binom{n}{i}+n-k=O(n^k).$\end{center}
\end{proof}
\par We can take a step forward and look at the following symmetric poset, which seems to be an even more natural generalisation of the butterfly.
\par We call the poset having $k$ incomparable maximal elements and $k$ incomparable minimal elements, all of which are less than all $k$ maximal elements, a $K_{k,k}$.

\begin{center}
\begin{tikzpicture}
\node (m1) at (-1,0) {$\bullet$};
\node (m2)at (1,0) {$\bullet$};
\node (m3) at (-3,-2) {$\bullet$};
\node (m4) at (-2,-2) {$\bullet$};
\node (m6) at (-1,-2) {$\bullet$};
\node (m5) at (0,-2) {...};
\node (m7) at (1,-2) {$\bullet$};
\node (m8) at (2,-2) {$\bullet$};
\node (m9) at (3,-2) {$\bullet$};
\node (m10) at (0,0) {...};
\node (m11) at (-3,0) {$\bullet$};
\node (m12) at (-2,0) {$\bullet$};
\node (m13) at (2,0) {$\bullet$};
\node (m14) at (3,0) {$\bullet$};
\draw (m1)--(m3)--(m2)--(m4)--(m1)--(m6)--(m2)--(m7)--(m1)--(m8)--(m2)--(m9)--(m1);
\draw (m11)--(m3)--(m11)--(m4)--(m11)--(m6)--(m11)--(m7)--(m11)--(m8)--(m11)--(m9);
\draw (m12)--(m3)--(m12)--(m4)--(m12)--(m6)--(m12)--(m7)--(m12)--(m8)--(m12)--(m9);
\draw (m13)--(m3)--(m13)--(m4)--(m13)--(m6)--(m13)--(m7)--(m13)--(m8)--(m13)--(m9);
\draw (m14)--(m3)--(m14)--(m4)--(m14)--(m6)--(m14)--(m7)--(m14)--(m8)--(m14)--(m9);
\end{tikzpicture}
\end{center}
\begin{proposition}$\text{sat}^*(n, K_{k,k})=O(n^{2k-2})$.\begin{proof} We start with $\mathcal F_0$ consisting of all the singletons and the following $k-1$ chains
\begin{center}
$\mathcal C_1:$ $2,3,4,\dotsc,n,1$\\$\mathcal C_2:$
$1,3,4,\dotsc,n,2$\\$\vdots$\\$\mathcal C_{k-1}: $$1,2,3,\dotsc,n,k-1$,
\end{center}
where by the chain $a_1,a_2,\dotsc,a_n$ we mean the chain $\emptyset, \lbrace a_1\rbrace, \lbrace a_1, a_2\rbrace,\dotsc,\lbrace a_1, a_2,\dotsc,a_n\rbrace$. It is clear that $\mathcal F_0$ is a $K_{k,k}$-free family since the maximal elements cannot be singletons, but then by the pigeonhole principle at least two will have to be in the same chain, which is impossible since they have to form an antichain.\\
We have seen in the construction of a $K_{2,k}$-saturated family that if we have a chain and a set $M$ of size greater than $k$, we can construct a $K_{2,k}$ with $M$, $k$ arbitrary singletons of $M$ and one element of the chain, as long as those singletons of do not contain the maximum element of $M$ with respect to the order induced by the chain.\\
Assume now that $M$ is a set of size at least $2k-1$ not in $\mathcal F_0$. Let $S$ be the set of maximal elements of $M$, with respect to the $k-1$ orders induced by the above chains. We have $|M\setminus S|\geq k$, thus we can select $t_1, t_2,\dotsc, t_k$ elements of $M$, none of which is the maximum with respect to any of the $k-1$ orders. Therefore, our previous construction gives us $k-1$ sets $A_1, A_2,\dotsc, A_{k-1}$ with the property that $\lbrace t_1, t_2,\dotsc,t_k\rbrace\subset A_i$, $A_i\parallel M$ and $A_i\in\mathcal C_i$ for all $1\leq i\leq k-1$. \\
Observe also that since $t_i$ is not among the maximums, $t_i\geq k$ for every $i$, and consequently, $t_1, t_2,\dotsc t_k$ will have the same order in all of the above chains, which is just the usual order. Let $t_k$ be the biggest of them.\\
If $M$ contains $1,2,\dotsc, k-1$, then it contains all maximal elements of the $k-1$ chains, which by our previous construction means that $i\notin A_i$ and we can replace $A_i$ with $\lbrack n\rbrack-\lbrace i\rbrace\in\mathcal C_i$. We then have $M$, $\lbrack n\rbrack-\lbrace i\rbrace, \lbrace t_i\rbrace$, $1\leq i\leq k-1$ forming a copy of $K_{k,k}$.\\
If $M$ does not contain 1, then the sets that our previous construction gives us for the chains $\mathcal C_i$, $i\geq2$ are $\lbrace 1,3,4,\dotsc t_k\rbrace$, $\lbrace 1,2,4,\dotsc t_k\rbrace,\dotsc,\lbrace 1,2,3,\dotsc t_k\rbrace$. For $\mathcal C_1$ our construction gives $\lbrace 2,3,\dotsc \text{max}(t_k,t)\rbrace$, where $t$ is the smallest one not in $M$, with respect to the first order. The first $k-2$ sets have the same size, $t_k-1$, but are different, thus incomparable. The set obtained from the first chain has size at least $t_k-1$, but it does not contain 1, so it cannot be comparable to any of the above. Therefore $M$ together with these $k-1$ sets and the $k$ singletons forms a $K_{k,k}$.\\
If $M$ contains 1, but it does not contain 2, then $t_k\geq 2$ and our construction gives $\lbrace 2,3,\dotsc t_k\rbrace$, $\lbrace 1,2,4,\dotsc t_k\rbrace,\dotsc,\lbrace 1,2,3,\dotsc t_k\rbrace$ for chains $\mathcal C_i$, $i\neq 2$.\\For $\mathcal C_2$ we get $\lbrace 1,3,4,\dotsc,\text{max}(t,t_k)\rbrace$. Once again, the first $k-2$ sets are different and have the same size, while the last set has size greater or equal to the size of the others, but it does not contain 2, while all the other set all contain it. Hence, together with $M$, they form an antichain of size $k$, and together with the $k$ singletons form a copy of $K_{k,k}$. \\
It can be shown inductively (if $M$ contains $1,2,\dotsc,t$, but not $t+1$) that we always form a copy of $K_{k,k}$.\\
We therefore have a $K_{k,k}$-free family which forms a $K_{k,k}$ with any set $M\notin \mathcal F_0$ with cardinality at least $2k-1$. Thus, the same argument as for for the $K_{2,k}$, gives that\begin{center}
$\text{sat}^*(n,K_{k,k})\leq\displaystyle\sum_{i=0}^{2k-2}\binom{n}{i}+(k-1)(n-2k+1)=O(n^{2k-2})$.\end{center}
\end{proof}
\end{proposition}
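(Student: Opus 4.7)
The plan is to mimic the construction from Proposition~5, but use $k-1$ chains instead of a single one. Concretely, let $\mathcal F_0$ consist of all singletons together with $k-1$ maximal chains $\mathcal C_1,\dotsc,\mathcal C_{k-1}$, where $\mathcal C_j$ is the chain on $[n]$ whose linear order places the element $j$ last; that is, the top set of $\mathcal C_j$ is $[n]$, the set just below the top is $[n]\setminus\lbrace j\rbrace$, and so on. Choosing the $k-1$ ``last elements'' to be distinct is what will eventually force the large sets of these chains to be pairwise incomparable. A quick pigeonhole argument shows $\mathcal F_0$ is $K_{k,k}$-free: no $k$-element antichain of maximal elements can consist of singletons (each would need $k$ incomparable sets below it in the singleton layer, impossible), and any $k$ non-singleton members of $\mathcal F_0$ must place two elements in the same chain, so cannot form an antichain.

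The heart of the argument is to show that whenever $M\notin\mathcal F_0$ has $|M|\geq 2k-1$, the family $\mathcal F_0\cup\lbrace M\rbrace$ contains an induced $K_{k,k}$. Since at most $k-1$ elements of $M$ can be ``the maximum of $M$'' with respect to the $k-1$ chain orders, there exist $k$ elements $t_1,\dotsc,t_k\in M$ none of which is a chain-order maximum. For each chain $\mathcal C_i$ we can then invoke the construction of Proposition~5 to produce $A_i\in\mathcal C_i$ with $\lbrace t_1,\dotsc,t_k\rbrace\subseteq A_i$ and $A_i\parallel M$. Together with the singletons $\lbrace t_1\rbrace,\dotsc,\lbrace t_k\rbrace$, the sets $M,A_1,\dotsc,A_{k-1}$ will be the desired $K_{k,k}$ provided that $M,A_1,\dotsc,A_{k-1}$ form an antichain.

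This last point is where I expect the main obstacle. The $A_i$'s are designed so that $A_i$ comes from chain $\mathcal C_i$; the natural choice makes $A_i$ miss the chain-maximum $i$ while every other $A_j$ contains $i$, which is what forces pairwise incomparability. Making this work uniformly requires a case analysis on how $M$ interacts with the set $\lbrace 1,2,\dotsc,k-1\rbrace$ of chain maxima: if $M\supseteq\lbrace 1,\dotsc,k-1\rbrace$ one can simply take $A_i=[n]\setminus\lbrace i\rbrace$; otherwise let $t$ be the smallest element of $\lbrace 1,\dotsc,k-1\rbrace$ outside $M$, and use $t$ in place of the missing chain-maximum for $\mathcal C_t$, so $A_t$ becomes a slightly longer initial segment of $\mathcal C_t$ not containing $t$, while the other $A_j$'s remain short. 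The verification that incomparability is preserved in every sub-case is fiddly but mechanical.

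Once this structural lemma is in hand, the saturation argument is immediate: enumerate the missing sets of size at most $2k-2$ as $M_1,M_2,\dotsc$, and greedily add $M_\ell$ to the current family whenever doing so does not create a $K_{k,k}$. The resulting $\mathcal F$ is $K_{k,k}$-free by construction and $K_{k,k}$-saturated because any missing set of size $\geq 2k-1$ already creates a copy with $\mathcal F_0\subseteq\mathcal F$, and every missing set of size $\leq 2k-2$ was tested during the greedy phase. Bounding $|\mathcal F|$ by counting sets of size at most $2k-2$ and the $k-1$ chains gives $\sum_{i=0}^{2k-2}\binom{n}{i}+(k-1)(n-2k+1)=O(n^{2k-2})$.
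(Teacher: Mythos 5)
Your construction is exactly the paper's: the same $\mathcal F_0$ of all singletons plus $k-1$ chains each ending in a distinct element of $\lbrace 1,\dotsc,k-1\rbrace$, the same choice of $t_1,\dotsc,t_k\in M$ avoiding the at most $k-1$ chain-maxima, the same case split on whether $M\supseteq\lbrace 1,\dotsc,k-1\rbrace$ (with the chain indexed by the smallest missing element playing the special, possibly longer, role), and the same greedy completion yielding $\sum_{i=0}^{2k-2}\binom{n}{i}+(k-1)(n-2k+1)=O(n^{2k-2})$. The incomparability check you call ``fiddly but mechanical'' is treated at the same level of detail in the paper (two explicit cases plus an ``it can be shown inductively''), so your proposal is correct and essentially identical in approach.
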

\par We can see that the singleton analysis is not as helpful as it was for the butterfly since increasing the number of incomparable elements increases the case analysis too. Moreover, if the aim is to prove a lower bound of order at least $n^k$ using these methods, we will need to use sets of size $k$ rather than singletons or pairs and we immediately loose control. The power of the singletons stays in the property that if they are incomparable to a non-empty set, then they are disjoint from that set. There is no obvious strong enough analogue for sets of size $k$.
\par However, as illustrated in Theorem 3, the importance of pairs for the butterfly is perhaps evidence that the conjecture of Ferrara, Kay, Kramer, Martin, Reiniger, Smith and Sullivan is true:
\begin{conjecture}[\cite{Ferrara2017TheSN}] $sat^*(n,\mathcal B)=\Theta(n^2)$.	
\end{conjecture}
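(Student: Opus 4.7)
The plan is to split on the number $k$ of singletons in $\mathcal F$. When $k\geq cn$ for some constant $c>0$, Theorem 3 already gives $|\mathcal F|\geq k(n-k)+\binom{k}{2}=\Omega(n^2)$, so the real work lies in the regime $k=o(n)$. In that regime there are $\binom{n-k}{2}=\Omega(n^2)$ pairs $\{i,j\}$ such that neither $\{i\}$ nor $\{j\}$ is an element of $\mathcal F$, and the goal would be to inject these pairs into $\mathcal F$.

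First I would extend the chevron machinery of Theorem 3 to such pairs. For any such $\{i,j\}\notin\mathcal F$, adding $\{i,j\}$ creates a butterfly, and since its appearing as a maximal element would force both $\{i\},\{j\}\in\mathcal F$, it must appear as a minimal element. So among all chevrons $(A,B,C)$ with $A,B,C\in\mathcal F$, $A\parallel B$, $\{i,j\}\subseteq A\cap B$, and $C\parallel\{i,j\}$, I would pick one with $|C|$ maximal and assign it to $\{i,j\}$; then, paralleling the argument in Theorem 3, I would try to show $C\cup\{i,j\}\in\mathcal F$ and define the candidate map $\{i,j\}\mapsto C\cup\{i,j\}$.

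The hard part is proving injectivity in this regime. In Theorem 3, when two pairs $\{i,j\}$ and $\{i,j_1\}$ sharing an index collapsed to the same chevron, the contradiction arose from the fact that the remaining singletons belonged to $\mathcal F$ and combined with $A,B$ to form a forbidden butterfly. Without that resource this step is lost, and chevron collisions between pairs sharing an index become genuinely possible. To remedy this I would try to refine the choice via a secondary invariant (for instance, maximising $|A\cap B|$ once $|C|$ is maximal, or lex-minimising $A$ and $B$), and additionally exploit the top-bottom symmetry of the butterfly by applying the dual construction to large sets. Combined with Theorem 3 in the high-singleton regime, this would hopefully yield the full $\Omega(n^2)$ bound.

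The main obstacle is the one explicitly flagged at the end of the paper: unlike singletons, a pair $\{i,j\}$ incomparable with some $C\in\mathcal F$ need not be disjoint from $C$, so the rigidity that fuelled Theorems 2 and 3 is genuinely weaker. It seems likely that closing the gap will require a new ingredient beyond the chevron-injection framework, such as a double-counting of incomparable pairs of sets in $\mathcal F$ via a Bollob\'as-type inequality, or a shifting/compression argument that reduces a general $\mathcal B$-saturated family to one structurally close to the canonical example of Ferrara, Kay, Kramer, Martin, Reiniger, Smith and Sullivan.
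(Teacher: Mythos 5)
You should first note that this statement is not proved in the paper at all: it is stated as Conjecture 5, an open problem of Ferrara, Kay, Kramer, Martin, Reiniger, Smith and Sullivan, and the paper's own results only establish the lower bounds $n+1$ (Theorem 2) and $\binom{k}{2}+k(n-k)$ when $\mathcal F$ contains $k\geq 1$ singletons (Theorem 3), together with the cited $O(n^2)$ construction for the upper bound. So there is no proof to match, and your proposal, as you yourself acknowledge, does not close the problem either.

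The gap in your plan is exactly the one you flag, and it is fatal as stated. In the proof of Theorem 3, the hypothesis that the pair $\lbrace i,j\rbrace$ contains \emph{exactly one} singleton of $\mathcal F$ is used precisely at the point where two pairs sharing an index receive the same chevron: there one deduces $i\notin C$, hence $\lbrace i\rbrace\notin\mathcal F$, hence $\lbrace j\rbrace,\lbrace j_1\rbrace\in\mathcal F$, and these two singletons together with $A$ and $B$ form a forbidden butterfly. For pairs with \emph{neither} singleton in $\mathcal F$ -- which is the regime $k=o(n)$ where you need $\Omega(n^2)$ such pairs -- this contradiction is simply unavailable, so both the uniqueness of the chevron assignment and (since Case 1 of the injectivity argument invokes that uniqueness) the injectivity of the map $\lbrace i,j\rbrace\mapsto C\cup\lbrace i,j\rbrace$ can fail; two pairs $\lbrace i,j\rbrace$ and $\lbrace i,j_1\rbrace$ with $j,j_1\in C$ may genuinely collide. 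Your suggested repairs (secondary maximisation of $|A\cap B|$, lexicographic tie-breaking, dualising, a Bollob\'as-type inequality, compression) are listed as possibilities but none is carried out, and no argument is given that any of them resolves the collision; the paper itself points out in Section 5 that the key property of singletons -- incomparability with a nonempty set forces disjointness -- has no adequate analogue for larger sets. So what you have is a correct reduction of the problem to the low-singleton regime plus a restatement of the known obstruction, not a proof.
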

We have also seen in this final section that starting with just the singletons and a saturated chain we could build a $K_{2,k}$-saturated family by adding sets of size at most $k$. The construction was powerful enough to allow us to consider only the lower levels of the power set, thus leading us to conjecture the following:
\begin{conjecture}
$sat^*(n,K_{2,k})=\Theta(n^k)$.	
\end{conjecture}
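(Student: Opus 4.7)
The upper bound $\text{sat}^*(n,K_{2,k}) = O(n^k)$ is already given by Proposition 5, so the task reduces to proving the matching lower bound $\text{sat}^*(n,K_{2,k}) = \Omega(n^k)$. My plan is to generalise the chevron-and-injection machinery of Section 2 by assigning to each $k$-subset $S \notin \mathcal{F}$ a canonical witness of the $K_{2,k}$ that arises when $S$ is added, and then showing that this assignment yields an injection from a collection of size $\Omega(n^k)$ into $\mathcal{F}$.

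Concretely, I would define a \emph{$(k-1)$-chevron} to be a tuple $(A, B, C_1, \ldots, C_{k-1})$ with $A, B \in \mathcal{F}$ incomparable, $C_1, \ldots, C_{k-1} \in \mathcal{F}$ forming an antichain, and each $C_j \subseteq A \cap B$. For every $k$-set $S \notin \mathcal{F}$ with $S \subset A, B$ and $S$ incomparable to each $C_j$, the collection $\lbrace A, B, S, C_1, \ldots, C_{k-1} \rbrace$ is an induced $K_{2,k}$, and saturation guarantees at least one such chevron exists. Among these, assign to $S$ the one maximising $|C_1 \cup \cdots \cup C_{k-1}|$ and send $S$ to $S \cup C_1 \cup \cdots \cup C_{k-1}$. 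Following the template of Theorem 2, one would verify by a case analysis (on whether the image appears as a maximal or minimal element of a putative $K_{2,k}$) that the image lies in $\mathcal{F}$, and that the map is injective on an appropriate subcollection of missing $k$-sets. Combined with an analogue of Lemma 1 stating that many singletons in $\mathcal{F}$ force many $k$-sets to lie in $\mathcal{F}$, a dichotomy based on the number of singletons should then deliver the $\Omega(n^k)$ bound, exactly paralleling how Theorems 2 and 3 combine for the butterfly.

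The principal obstacle is precisely the one flagged by the author in the paragraph preceding the conjecture: a singleton incomparable to a non-empty set is disjoint from it, and there is no such clean property for $k$-sets. When $S$ is a $k$-set that would be maximal in a $K_{2,k}$, the antichain of $k$ minimal elements below it need not consist of singletons -- for $k \geq 3$ the pairs in $S$ already form an antichain of size $\binom{k}{2} \geq k$ -- and the other maximal element need not equal $S$. Each of these extra possibilities has to be ruled out or absorbed into the injection, and the case analysis grows rapidly with $k$. Overcoming this will likely require either a multi-parameter extremality choice (optimising over all the $C_j$'s jointly rather than just over $|C_1 \cup \cdots \cup C_{k-1}|$), an inductive reduction on $k$ which would in turn require resolving the butterfly case of Conjecture 6, or a genuinely new structural idea for controlling antichains of $k$-subsets inside a $K_{2,k}$-saturated family.
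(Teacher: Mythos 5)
The statement you are trying to prove is Conjecture 7 of the paper: it is posed as an open problem, not a theorem, and the paper contains no proof of it. What the paper does establish is only the upper bound $\mathrm{sat}^*(n,K_{2,k})=O(n^k)$ (Proposition 5); the matching lower bound $\Omega(n^k)$ is exactly the content left open, and your proposal does not close it. Your text is a research plan rather than an argument: every step that carries the actual difficulty is deferred (``one would verify by a case analysis\dots'', ``Combined with an analogue of Lemma 1\dots should then deliver the bound''), and you yourself end by saying that overcoming the main obstacle will ``likely require \dots a genuinely new structural idea.'' A sketch that conditions its conclusion on unproved lemmas and an unresolved obstacle is not a proof of the conjecture.

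Concretely, the gaps are these. First, the existence of your canonical $(k-1)$-chevron is not guaranteed: when a $k$-set $S\notin\mathcal F$ is added, the induced $K_{2,k}$ it creates may have $S$ as one of the two maximal elements (for $k\ge 3$ the proper subsets of $S$ contain large antichains, so the argument that killed this case for singletons, and the Lemma~1 argument that killed it for pairs, has no analogue), so the assignment $S\mapsto(A,B,C_1,\dots,C_{k-1})$ is not even well defined without new ideas. Second, the maximality of $|C_1\cup\dots\cup C_{k-1}|$ does not drive the replacement arguments of Theorem 2: those arguments rely on comparing the single set $C$ with a single set $C^*$ and on the fact that a singleton incomparable to a nonempty set is disjoint from it; with several $C_j$'s, the set $S\cup C_1\cup\dots\cup C_{k-1}$ interacts with a putative new $K_{2,k}$ in many ways that the two-case analysis does not cover, and neither membership of the image in $\mathcal F$ nor injectivity of the map follows. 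Third, even granting all of that, you would need the analogue of Lemma~1 (that singletons in $\mathcal F$ force many $k$-sets into $\mathcal F$) and a dichotomy argument, neither of which is supplied. These are precisely the points at which the paper says its singleton/pair technique breaks down, which is why the statement remains a conjecture; your proposal restates the difficulty but does not resolve it.
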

Our construction for the $K_{k,k}$-saturated family relied heavily on our previous observations about the $K_{2,k}$. By choosing chains that induced incompatible enough orderings of the ground set, we were able to adapt the construction of a $K_{2,k}$ saturated family and construct a $K_{k,k}$-saturated family, concentrated again at lower levels of the power set. Consequently, we conjecture that the following statement is also true:
\begin{conjecture}
$sat^*(n,K_{k,k})=\Theta(n^{2k-2})$.	
\end{conjecture}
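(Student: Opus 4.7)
The target $\Theta(n^{2k-2})$ agrees with the upper bound from Proposition 7, which comes from counting sets of cardinality at most $2k-2$, so the natural object to inject from is a $(2k-2)$-element subset of $[n]$. This specialises to pairs when $k=2$, and my plan is to adapt the pair--chevron injection of Theorem 3 by replacing the $2$-element antichains of the butterfly with $k$- and $(k-1)$-element antichains of $K_{k,k}$.

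Concretely, I would call a \emph{generalised chevron of order $k$} a tuple $(A_1,\dots,A_k,C_1,\dots,C_{k-1})$ of elements of $\mathcal F$ with $\{A_1,\dots,A_k\}$ and $\{C_1,\dots,C_{k-1}\}$ both antichains and $C_j\subset A_i$ for all $i,j$ -- that is, a $K_{k,k}$ missing one minimal element. For each $(2k-2)$-set $S\notin\mathcal F$, $K_{k,k}$-saturation produces a $K_{k,k}$ in $\mathcal F\cup\{S\}$ using $S$; generically $S$ should play a minimal role, yielding such a chevron with $S$ as the absent minimum. Among all chevrons that $S$ completes I would pick one maximising $\sum_j|C_j|$, assign it canonically to $S$, and try to prove injectivity by the same device used in Theorem 3: if $S\neq S'$ are assigned the same chevron, consider $C_j\cup S'$ for a suitable $j$; it is incomparable to $S$ and sits inside every $A_i$, so by maximality it is not in $\mathcal F$ and hence completes another $K_{k,k}$ with members of $\mathcal F$. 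A careful case analysis of the role of $C_j\cup S'$ in this new $K_{k,k}$ should either exhibit a $K_{k,k}$ already in $\mathcal F$ or upgrade the chevron to a strictly larger one available to $S'$, contradicting maximality. A parallel argument should show $C_{k-1}\cup S\in\mathcal F$, converting the chevron assignment into a genuine injection into $\mathcal F$.

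The main obstacle is already visible at $k=2$: the symmetric case in Theorem 3 forces the restriction to pairs $\{i,j\}$ containing at least one singleton of $\mathcal F$, giving the conditional bound $\binom{k}{2}+k(n-k)$ rather than the full $\binom{n}{2}$. For $K_{k,k}$ the analogous obstruction is more severe, because $S$ may appear as a maximal rather than a minimal element of the forced $K_{k,k}$ and because the $(k-1)$-antichain $\{C_1,\dots,C_{k-1}\}$ interacts nontrivially with a new antichain of minimums when two chevrons collide; the pigeonhole collapse that folded Case 2 into Case 1 in Theorem 3 no longer has a clean analogue. Since the butterfly conjecture itself remains open, I expect this method to yield at best a conditional $\Omega(n^{2k-2})$ bound depending on the lower levels of $\mathcal F$, and the full conjecture will likely require a genuinely new ingredient -- perhaps an induction on $k$ that exploits the $K_{2,k}$-saturation structure implicit inside every $K_{k,k}$-saturated family, combined with progress on Conjecture 8.
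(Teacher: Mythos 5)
The statement you are addressing is Conjecture 10 of the paper: it is not proved there at all. The paper's only actual content in this direction is the upper bound $\mathrm{sat}^*(n,K_{k,k})=O(n^{2k-2})$ of Proposition 7 (the construction from singletons and $k-1$ chains), together with informal motivation for why the matching lower bound might hold; the lower bound is left open, as is even the $k=2$ case, the butterfly conjecture of Ferrara et al. Your proposal, by your own account, does not close this gap either: it is a programme, not a proof. The decisive missing steps are exactly the ones you flag. First, nothing forces the forbidden set $S$ to sit as a minimal element of the created $K_{k,k}$; for singletons and pairs the paper could rule out the maximal role by an elementary cardinality/membership argument, but for a $(2k-2)$-set there is no such mechanism, and the reduction of the maximal case to the minimal case (the step that folded Case 2 into Case 1 in Theorem 3) has no clean analogue when there are $k$ maximal elements to control simultaneously. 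Second, the maximality-upgrade argument in Theorem 3 leans on the special property, stated explicitly at the end of the paper, that a singleton incomparable to a nonempty set is disjoint from it; there is no strong enough analogue for the sets $C_j$ and $S$ of larger size, so the collision analysis for two $(2k-2)$-sets assigned the same generalised chevron does not go through. Third, even if all of that worked, the count would be restricted (as in Theorem 3, where only pairs meeting a singleton of $\mathcal F$ are counted) and would at best give a bound conditional on the structure of the lower levels of $\mathcal F$, not $\Omega(n^{2k-2})$ unconditionally.

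So the honest assessment is: your sketch is a reasonable research direction, consistent with the paper's own heuristics (the role of pairs in Theorem 3, the $K_{2,k}$ structure inside $K_{k,k}$), but it does not prove the statement, and the statement itself remains a conjecture both in the paper and after your attempt. If you want an unconditional result along these lines, the first concrete target should be the $k=2$ case, i.e.\ improving the $n+1$ lower bound of Theorem 2 toward $\Theta(n^2)$, since any obstruction there will already defeat the general argument.
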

\bibliography{butterfly}
\bibliographystyle{ieeetr}
\end{document}